\newtheorem{theorem}{Theorem}[section]
\newtheorem{lemma}[theorem]{Lemma}
\newtheorem{proposition}[theorem]{Proposition}
\theoremstyle{definition}
\theoremstyle{remark}
\newtheorem{remark}[theorem]{Remark}
\numberwithin{equation}{section}
\begin{document}

\title[Restriction of eigenfunctions on products of spheres]{Restriction of eigenfunctions on products of spheres to submanifolds of maximal flats}

\author{Yunfeng Zhang}
\address{Department of Mathematical Sciences, University of Cincinnati, Cincinnati, OH 45221-0025}
\email{zhang8y7@ucmail.uc.edu}



\subjclass[2020]{35P20, 58J50} 
\keywords{Laplace--Beltrami eigenfunction, restriction to a submanifold, product of spheres}



\begin{abstract}
Let $M$ be a product of rank-one symmetric spaces of compact type, each of dimension at least $3$. We establish sharp $L^p$ bounds 
for the restriction of Laplace--Beltrami eigenfunctions on $M$ to arbitrary submanifolds contained in a maximal flat, for all $p \ge 2$. 
The proof combines precise asymptotics of Jacobi polynomials and positivity of Fourier coefficients of spherical functions.
\end{abstract}

\maketitle

\section{Introduction}
Let $M$ be a compact Riemannian manifold of dimension $d$. Let $\Delta$ be the Laplace--Beltrami operator on $M$, and let $f$ be its eigenfunctions such that $\Delta f=-N^2f$, $N>1$. The study of concentration of $f$ has become a popular subject \cite{Zel17}, and one of the approaches is to bound its restriction to submanifolds of $M$, as initiated by Tataru \cite{Tat98}  and Reznikov \cite{Rez04}. Let $S$ be a submanifold of dimension $k$. Let $L^p(M)$, $L^p(S)$ be the Lebesgue spaces associated to the Riemannian volume measure on $M$ and $S$ respectively. The following fundamental restriction bounds for general $M$ were established by Burq--G\'erard--Tzvetkov \cite{BGT07} and Hu \cite{Hu09}: it holds
\begin{align}\label{eq: BGT}
    \|f\|_{L^p(S)}\lesssim N^{\rho(k,d,p)}\|f\|_{L^2(M)},
\end{align}
where 
\begin{align*}
\rho(d-1,d,p)&=\left\{\begin{array}{ll}
\frac{d-1}{2}-\frac{d-1}{p}, \qquad \text{if }\frac{2d}{d-1}\leq p\leq\infty,\\
\frac{d-1}{4}-\frac{d-2}{2p}, \qquad \text{if }2\leq p\leq \frac{2d}{d-1},
\end{array}\right.\\
\rho(d-2,d,p)&=\frac{d-1}{2}-\frac{d-2}{p},  \qquad  \text{if }2<p\leq \infty, \\
\rho(k,d,p)&=\frac{d-1}{2}-\frac{k}{p}, \qquad  \text{if }1\leq k\leq d-3\text{ and }2\leq p\leq \infty;
\end{align*}
and for the case $p=2$ and $k=d-2$, it holds 
\begin{align}\label{eq: logloss}
\|f\|_{L^2(S)}\lesssim N^{\frac{1}{2}}(\log N)^{\frac{1}{2}}\|f\|_{L^2(M)}.
\end{align}
All the above estimates are sharp on spheres, except for the logarithmic loss, which however has been eliminated by Chen--Sogge \cite{CS14} and Wang--Zhang \cite{WZ21} for totally geodesic submanifolds.

The goal of this note is to record an instance of \textit{sharp polynomial improvement} over the above results of Burq--G\'erard--Tzvetkov and Hu, on products of compact rank-one symmetric spaces, for all $p\geq 2$: namely, the estimate \eqref{eq: no loss} below. For each $i=1,\ldots,r$ ($r\geq 2$), let $M_i$ be a compact rank-one symmetric space (CROSS) (of compact type) of dimension $d_i$; this family includes the standard spheres. Let $M=M_1\times \cdots \times M_r$ be their product. A maximal flat of $M$ is a maximally embedded flat, totally geodesic submanifold, which is isomorphic to $\mathbb{T}^r$. We establish:

\begin{theorem}\label{thm: prod}
Let $S$ be a $k$-dimensional submanifold of a maximal flat of $M$, $k=0,1,\ldots,r$. 
Suppose without loss of generality that $d_1\leq d_2\leq\cdots\leq d_r$. 
Let 
$$\tau(d,p):=\left\{\begin{array}{ll}
-\frac1p, & p\geq\frac{4}{d-1},\\
-\frac{d-1}{4}, & 0<p\leq\frac{4}{d-1}.
\end{array}\right.$$
Let $f$ be an eigenfunction of $\Delta$ such that $\Delta f=-N^2f$, $N>1$. Then
\begin{align}\label{eq: general}
    \|f\|_{L^p(S)}\lesssim_\varepsilon N^{\frac{d-2}{2}+\sum_{i=1}^k\tau(d_i,p)+\varepsilon} \|f\|_{L^2(M)}, \qquad\text{for all }p\geq 2.
\end{align}
Moreover, when $r \ge 5$, the $\varepsilon$-loss in \eqref{eq: general} can be removed.
In particular, if $\dim M_i\geq 3$ for all $i$ and $r\geq 5$,  
then
\begin{align}\label{eq: no loss}
    \|f\|_{L^p(S)}\lesssim N^{\frac{d-2}{2}-\frac{k}{p}}\|f\|_{L^2(M)}, \qquad\text{for all }p\geq 2, 
\end{align}
and the above estimate is sharp. 
\end{theorem}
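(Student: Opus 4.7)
The plan is to reduce the restriction estimate on the product $M$ to factor-wise restriction estimates, combining a lattice-point count in $\mathbb{Z}^r$ with pointwise asymptotics for Jacobi polynomials. An eigenfunction with $-\Delta f = N^2 f$ decomposes as $f = \sum_{\mathbf{n}} f_{\mathbf{n}}$, where $\mathbf{n}=(n_1,\ldots,n_r)$ ranges over tuples with $\sum_i \mu_i(n_i)=N^2$, the $\mu_i(\cdot)$ denote factor-eigenvalues, and $f_{\mathbf{n}}$ lies in the tensor product of individual eigenspaces. The cardinality of the tuple set is $\lesssim_\varepsilon N^{r-2+\varepsilon}$ in general and $\lesssim N^{r-2}$ for $r \geq 5$ by classical lattice-point bounds on spheres; this dichotomy is exactly the source of the $\varepsilon$-loss in the statement. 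Using a $TT^*$ argument restricted to the maximal flat containing $S$, I reduce the $L^p(S)$ bound to a kernel estimate for the associated spectral projector on $S \times S$.

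Next, the positivity advertised in the abstract enters as follows: let $\varphi_n^{(i)}$ denote the zonal spherical function of degree $n$ on $M_i$ centered at a chosen pole, so that its Fourier coefficients along a closed geodesic through the pole are non-negative, via the Jacobi/Harish--Chandra addition formula. This positivity lets me dominate the spectral kernel on $S \times S$ pointwise by a kernel built from tensor products of zonal functions, effectively reducing the analysis to products $\bigotimes_i \varphi_{n_i}^{(i)}$. Parameterizing the maximal flat by $(\theta_1,\ldots,\theta_r)\in \mathbb{T}^r$, the zonal $\varphi_n^{(i)}$ restricted to the flat equals, up to normalization, a Jacobi polynomial $P_n^{(\alpha_i,\beta_i)}(\cos\theta_i)$ with parameters determined by the CROSS type of $M_i$. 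Sharp Darboux/Hilb asymptotics then produce the one-variable exponent $n^{\tau(d_i,p)}$, the threshold $p=4/(d_i-1)$ reflecting whether the $L^p$ mass concentrates in the $O(1/n)$ boundary layer near $\theta=0,\pi$ or spreads over the bulk.

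The main obstacle is that $S$ is an arbitrary $k$-dimensional submanifold of $\mathbb{T}^r$, not a coordinate subtorus, so the product Jacobi estimate does not factor directly. My plan is to cover $S$ by small charts and apply the pointwise Jacobi product bound, then use the hypothesis $d_1\leq \cdots \leq d_r$ together with the monotonicity of $\tau(\cdot,p)$ in $d$ to show that the worst configuration is when $S$ projects onto the first $k$ coordinate circles, producing the sum $\sum_{i=1}^k \tau(d_i,p)$. The prefactor $N^{(d-2)/2}$ emerges from the $L^2(M)$ normalization against the full dimension. Sharpness of \eqref{eq: no loss} when $d_i\geq 3$ and $r\geq 5$ should follow by taking $f$ to be a product of highest-weight zonal harmonics, each saturating its factor bound on a great circle of $M_i$, so that the concentration on a maximal flat matches every inequality in the chain.
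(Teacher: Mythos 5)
Your overall skeleton (decomposition into joint eigenfunctions plus the lattice-point count with the $\varepsilon$/no-loss dichotomy, a $TT^*$ reduction on the flat, Szeg\H{o} asymptotics for the Jacobi factors, and the use of $d_1\leq\cdots\leq d_r$ to compare $\sum_l\tau(d_{i_l},p)$ with $\sum_{i\le k}\tau(d_i,p)$) is the same as the paper's, but the way you propose to use positivity is where the argument breaks. The projector kernel restricted to $S\times S$ is already a tensor product of zonal functions; the difficulty is that the $r-k$ inactive coordinates $\theta_i(\underline{\theta})$ are arbitrary smooth functions of the $k$ parametrizing variables, so the operator is not a convolution. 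Your plan to ``dominate the spectral kernel pointwise'' and then apply a ``pointwise Jacobi product bound'' needs no positivity at all (only $|\Phi_{i,n_i}|\le 1$), but it discards the oscillation of the active Jacobi factors, and this loses exactly the endpoint the theorem claims: for a three-dimensional factor ($\alpha=\beta=\tfrac12$) at $p=2$, absolute-value bounds of the kernel $\sim (n+1)^{-1}|\sin\theta|^{-1}$ only give an $L^2\to L^2$ bound with a $\log N$ loss (the analogue of \eqref{eq: logloss}), so \eqref{eq: no loss}, which includes $d_i=3$ and $p=2$, is out of reach this way. The actual role of Lemma \ref{lem: Phi_n=sum} is an operator-norm (not pointwise) reduction: expand each \emph{inactive} factor as $\sum_j c_j e^{im_j(\theta_i(\underline{\theta})-\theta_i(\underline{\theta}'))}$ with $c_j\ge 0$, $\sum_j c_j=1$; each exponential splits into a unimodular function of $\underline{\theta}$ times one of $\underline{\theta}'$ and hence does not change the $L^{p'}\to L^p$ norm, and the convex combination preserves the bound, leaving an honest convolution on $\mathbb{T}^k$ with the product of the active Jacobi polynomials, handled by Theorem \ref{thm: jac} (whose $\alpha=\beta=\tfrac12$, $p=2$ case uses the Dirichlet-kernel identity, i.e.\ the Fourier structure that pointwise domination destroys) together with Minkowski's inequality.

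The sharpness argument is also a genuine gap. A product of factor eigenfunctions (zonal or highest-weight), one per $M_i$, is a single joint eigenfunction, and by \eqref{eq: joint} its restriction ratio is at most $N^{\frac{d-r}{2}-\frac{k}{p}}$, which falls short of the claimed optimal exponent $\frac{d-2}{2}-\frac{k}{p}$ by a factor $N^{\frac{r-2}{2}}$; no single joint eigenfunction can saturate \eqref{eq: no loss}. The extremizer must exploit the multiplicity of the spectral decomposition: one sums $\prod_i\sqrt{k_i(n_i)}\,\Phi_{i,n_i}$ over \emph{all} lattice points $(n_1,\ldots,n_r)$ with $\sum_i(n_i^2+a_in_i)=N^2$ and comparable coordinates. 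Near the origin every term is comparable to its value at $0$, so $|f|\gtrsim N^{\frac{d-r}{2}}|\Lambda|$ on a $1/N$-neighborhood in $S$, while $\|f\|_{L^2(M)}=|\Lambda|^{1/2}$, and the needed lower bound $|\Lambda|\gtrsim N^{r-2}$ comes from equidistribution of lattice points on spheres for $r\ge 5$ (Pommerenke) --- which is precisely where the hypothesis $r\ge 5$ enters the sharpness claim. Your proposed single-product example cannot produce this extra $N^{\frac{r-2}{2}}$, so the final claim of sharpness is unsupported as written.
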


\begin{remark}
    In the setting of compact Lie groups, the author established the same sharp estimate as \eqref{eq: no loss} for eigenfunctions restricted to submanifolds of maximal flats  \cite{Zha26}. 
\end{remark}

\begin{remark}
In the setting of standard tori, Huang--Zhang proved that for totally geodesic submanifolds defined by linear equations with rational coefficients, the same eigenfunction restriction estimate as \eqref{eq: no loss} holds for all $p \geq 2$, up to an $\varepsilon$-loss \cite{HZ21}.
\end{remark}

\begin{remark}
In particular, our estimates apply to geodesics, since every geodesic lies in a maximal flat of $M$. 
It remains an interesting question to get sharp estimates for restriction of eigenfunctions to submanifolds which are not contained in any maximal flat. A possible approach, which we leave for future work, is to adapt the argument of Burq--G\'erard--Tzvetkov \cite{BGT07} to the product manifold setting. 
\end{remark}

\begin{remark}
When two-dimensional factors such as $\mathbb{S}^2$ or $\mathbb{RP}^2$ are present, the bound \eqref{eq: general} is not expected to be sharp even without the $\varepsilon$-loss, as can be seen through the case of products of two-spheres, for which the estimate \eqref{eq: general} when specialized to the case $p=2$ is worse than the general bounds \eqref{eq: BGT} of Burq--G\'erard--Tzvetkov and Hu. It is thus an interesting question to get sharp bounds for this bad case. 
\end{remark}

\begin{remark}
Suppose $S$ is a submanifold of $M$ of product type, namely, $S=S_1\times\cdots \times S_r$ where $S_i$ is a 
submanifold of $M_i$ for each $i$. Then by applying to each $(M_i,S_i)$ the estimate \eqref{eq: BGT}, along with the lossless version of \eqref{eq: logloss} in the cases where it is available, one can recover Theorem \ref{thm: prod} in this product-type setting. Thus the purpose of Theorem \ref{thm: prod} is to treat submanifolds which are not of product type. 
\end{remark}

\begin{remark}
For global $L^p$ estimates of eigenfunctions, the author established on general symmetric spaces of compact type that 
    $$\|f\|_{L^p(M)}\lesssim N^{\frac{d-2}{2}-\frac{d}{p}}\|f\|_{L^2(M)},$$
for $r\geq 5$ and $p>2+{8}/{(r-4)}$  \cite{Zha21}. 
The above sharp estimate also holds on products of CROSSs for $r\geq 5$ and $p\geq {2(d_i+1)}/{(d_i-1)}$ for all $i$, by applying Sogge's classical spectral projector estimates \cite{Sog88} to each factor. It remains open to establish sharp $L^p$ estimates for all $p\geq 2$. 
\end{remark}

Our approach to proving Theorem \ref{thm: prod} is to extract information about general eigenfunctions from a special class of eigenfunctions, namely the spherical functions. 
A key ingredient is the mapping property of convolution with Jacobi polynomials, which express spherical functions on CROSSs. Let $P_n^{(\alpha,\beta)}(x)$ ($n\in\mathbb{Z}_{\geq 0}$) denote the Jacobi polynomials, defined as the orthogonal polynomials on the interval $[-1,1]$ with the weight function $(1-x)^\alpha(1+x)^\beta$. 
We follow the classical treatment in Szeg\"o's book \cite{Sze75}. In particular, we use the same normalization 
$$P_n^{(\alpha,\beta)}(1)=\begin{pmatrix}
    n+\alpha\\ n
\end{pmatrix}.$$
It will be convenient to denote, for $\delta\geq 0$, $p>0$, and $n\geq 0$,
\begin{align}\label{eq: A(delta,p,n)}
    A(\delta,p,n):=\left\{\begin{array}{ll}
      (n+1)^{\delta-\frac{1}{p}},  & p>\frac{1}{\delta+\frac12}, \\
         (n+1)^{-\frac{1}{2}},  & 0<p\leq \frac{1}{\delta+\frac12}.
   \end{array}\right. 
\end{align}
We establish: 

\begin{theorem}\label{thm: jac}
Assume that $0\leq\beta\leq \alpha$. For $n\in\mathbb{Z}_{\geq 0}$, let $T_{n}^{\alpha,\beta}$
   denote the convolution operator on $\mathbb{T}=\mathbb{R}/2\pi\mathbb{Z}$ with kernel $P_{n}^{(\alpha,\beta)}(\cos\theta)$, 
   where $\theta\in \mathbb{T}$.  
Then for the following cases:
\begin{enumerate}
    \item $\alpha > \frac{1}{2}$,
    \item $0 \le \alpha < \frac{1}{2}$,
    \item $\alpha = \beta = \frac{1}{2}$,
\end{enumerate}
the operator $T_n^{\alpha,\beta}$ satisfies
\begin{align}\label{eq: Tn sharp bound}
    \|T_n^{\alpha,\beta}\|_{L^{p'}(\mathbb{T}) \to L^p(\mathbb{T})} \lesssim A(\alpha, p/2, n), \quad \text{for all } p \ge 2.
\end{align}

\end{theorem}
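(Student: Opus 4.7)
The plan is to bound the kernel in a suitable Lebesgue norm and invoke Young's convolution inequality. For $p \geq 2$ and $p' = p/(p-1)$, the H\"older identity $1/(p/2) + 1/p' = 1 + 1/p$ gives
\[
\|T_n^{\alpha,\beta}\|_{L^{p'}(\mathbb{T}) \to L^p(\mathbb{T})} \leq \bigl\|P_n^{(\alpha,\beta)}(\cos\theta)\bigr\|_{L^{p/2}(\mathbb{T})},
\]
so the task reduces to showing $\|P_n^{(\alpha,\beta)}(\cos\theta)\|_{L^{p/2}(\mathbb{T})} \lesssim A(\alpha, p/2, n)$, with special care at any critical $p$ where this kernel norm exhibits a logarithmic loss.

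I would then invoke the classical Hilb-type asymptotics for Jacobi polynomials (Szeg\"o \cite{Sze75}): on the bulk $\theta \in [n^{-1}, \pi - n^{-1}]$ one has
\[
|P_n^{(\alpha,\beta)}(\cos\theta)| \lesssim n^{-1/2}(\sin(\theta/2))^{-\alpha - 1/2}(\cos(\theta/2))^{-\beta - 1/2},
\]
while on the peak regions $|P_n^{(\alpha,\beta)}(\cos\theta)| \asymp n^\alpha$ near $0$ and $\asymp n^\beta$ near $\pi$. Splitting the integral into peak and bulk contributions and using $\beta \leq \alpha$, a direct computation gives $\|P_n^{(\alpha,\beta)}(\cos\theta)\|_{L^{p/2}(\mathbb{T})} \lesssim A(\alpha, p/2, n)$ for all $p \geq 2$ except at the threshold $p^* := 4/(2\alpha+1)$, where the bulk integral $\int_{1/n}^{\pi/2}\theta^{-1}\, d\theta$ produces an unwanted factor of $\log n$. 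In Case (1), $\alpha > 1/2$ places $p^* < 2$ outside the range $p \geq 2$, so Young's inequality concludes the proof directly; in Case (2), the same argument handles every $p \in [2, \infty) \setminus \{p^*\}$.

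The main obstacle is the critical exponent $p^*$ in Case (2), where $0 \leq \alpha < 1/2$ forces $p^* \in (2, \infty)$. To eliminate the logarithmic loss, I would use the global pointwise majoration
\[
|P_n^{(\alpha,\beta)}(\cos\theta)| \lesssim n^{-1/2}\bigl[\theta^{-\alpha-1/2} + (\pi-\theta)^{-\alpha-1/2}\bigr],
\]
valid throughout $(0, \pi)$, which follows from Hilb's bulk formula, the inequality $\beta \leq \alpha$, and the observation that the peak values $n^\alpha$ and $n^\beta$ are majorized by the singular factor on $[0, 1/n]$ and $[\pi - 1/n, \pi]$ respectively. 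Since $\sigma := \alpha + 1/2 \in (1/2, 1)$ and $\sigma = 2/p^*$, the Hardy--Littlewood--Sobolev inequality for the Riesz kernels $|\theta|^{-\sigma}$ and $|\pi - \theta|^{-\sigma}$ on $\mathbb{T}$ yields
\[
\|T_n^{\alpha,\beta} f\|_{L^{p^*}(\mathbb{T})} \lesssim n^{-1/2}\|f\|_{L^{(p^*)'}(\mathbb{T})},
\]
matching $A(\alpha, p^*/2, n) = n^{-1/2}$.

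Case (3), $\alpha = \beta = 1/2$, is critical at $p^* = 2$, where the bulk decays like $|\theta|^{-1}$ and the HLS approach degenerates. Here I would instead exploit the explicit Chebyshev identity $P_n^{(1/2,1/2)}(\cos\theta) = c_n \sin((n+1)\theta)/\sin\theta$ with $c_n \asymp n^{-1/2}$; the geometric expansion $\sin((n+1)\theta)/\sin\theta = \sum_{j=0}^n e^{i(n-2j)\theta}$ shows that the Fourier multiplier of $T_n^{1/2,1/2}$ takes only the values $0$ and $c_n$, so Plancherel yields $\|T_n^{1/2,1/2}\|_{L^2(\mathbb{T}) \to L^2(\mathbb{T})} \asymp n^{-1/2}$. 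For $p > 2$, the peak dominates the kernel's $L^{p/2}$ norm and Young's inequality applies cleanly.
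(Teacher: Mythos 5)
Your proposal is correct and follows essentially the same route as the paper: Szeg\H{o}'s bulk and peak asymptotics for $P_n^{(\alpha,\beta)}(\cos\theta)$ combined with Young's inequality away from the critical exponent, the Hardy--Littlewood--Sobolev inequality at the kink $p^*=4/(2\alpha+1)$ when $0\le\alpha<\frac12$, and the explicit Chebyshev/Dirichlet-kernel identity with Plancherel at $p=2$ when $\alpha=\beta=\frac12$. (Only cosmetic remark: at $\alpha=0$ one has $\sigma=\frac12$, so the range should read $\sigma\in[\frac12,1)$, which still lies within the scope of HLS.)
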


In particular, the case $\alpha=\beta=\tfrac12$ corresponds to the three-dimensional factors $\mathbb{S}^3$ or $\mathbb{RP}^3$ in the product manifold $M$. For this case, the sharp estimate \eqref{eq: Tn sharp bound} at the endpoint $p=2$ mirrors the log-loss removal of \eqref{eq: logloss} for geodesics on three-dimensional compact manifolds established by Chen--Sogge \cite{CS14}. In our setting of product manifolds containing three-dimensional factors, another key ingredient that enables the use of \eqref{eq: Tn sharp bound} for eigenfunction restriction to arbitrary submanifolds of maximal flats is the positivity of Fourier coefficients of spherical functions, as in Lemma \ref{lem: Phi_n=sum}. This positivity helps preserve oscillation in the convolution kernel and effectively reduces the study of arbitrary submanifolds to those of product type; see \eqref{eq: K=sum}.

The rest of the paper is organized as follows. In Section \ref{sec: proof of thm prod}, we prove the estimates in Theorem \ref{thm: prod} by first establishing restriction estimates for the joint eigenfunctions of the Laplace--Beltrami operators on each factor $M_i$, using Theorem \ref{thm: jac}. In Section \ref{sec: proof of thm jac}, we prove Theorem \ref{thm: jac}. In Section \ref{sec: sharpness}, we show the sharpness of \eqref{eq: no loss} in  Theorem \ref{thm: prod}.

\subsection*{Notation} Throughout the paper, we employ the standard notation $A \lesssim B$ to indicate that $A \leq CB$ for some constant $C > 0$.  If $A \lesssim B$ and $B \lesssim A$, we write $A \simeq B$. 
We write $A \lesssim_{\delta} B$ when $A \leq CB$ for some constant $C > 0$ depending on $\delta$. 
$A\ll B$ means there is a sufficiently small constant $c>0$ such that $A\leq cB$. 
We also write $A \sim B$ to mean that ${A}/{B} \to 1$.

\subsection*{Acknowledgments} The author thanks the referee for their valuable comments, which improved the paper. The author also thanks Xiaocheng Li and Cheng Zhang for helpful discussions.

\section{Proof of Theorem \ref{thm: prod}}\label{sec: proof of thm prod}
\subsection{Spherical functions}
Let $M$ be a CROSS of dimension $d$. 
The spherical functions $\Phi_n$ ($n\in\mathbb{Z}_{\geq 0}$) on $M$, when restricted to a maximal torus (large circle) $\mathbb{T}\cong\mathbb{R}/2\pi\mathbb{Z}$ of $M$, 
can be expressed by Jacobi polynomials as follows: 
\begin{align}\label{eq: Phi_n}
  \Phi_n(\theta)= \binom{n+\alpha}{n}^{-1}P_n^{(\alpha,\beta)}(\cos\theta),\qquad \theta\in\mathbb{T},
\end{align}
where $\alpha,\beta\in\frac12\cdot\mathbb{Z}$ such that 
\begin{align}\label{eq: alpha value}
    0\leq \beta\leq\alpha=\frac{d-2}{2}.
\end{align}
In particular, we have picked the standard normalization such that $\Phi_n(0)=1$. 
For the above formula, we refer to Theorem 4.5 in Chapter V of \cite{Hel00}, where the $\Phi_n$ are expressed in terms of hypergeometric functions, and to formula (4.21.2) in \cite{Sze75}, where these hypergeometric functions are rewritten as Jacobi polynomials. A specific case is when $d=3\Leftrightarrow \alpha=1/2$, corresponding to $\mathbb{S}^3$ or $\mathbb{RP}^3$, for which we also have 
$\beta=1/2$.

We will also use the following convenient formula, which expresses spherical functions as positive linear combinations of exponentials.

\begin{lemma}[Proposition 9.4 of Chapter III of \cite{Hel08}]\label{lem: Phi_n=sum}
For $n\in\mathbb{Z}_{\geq 0}$, we have 
    $$\Phi_n(\theta)=\sum_{j=0}^{q_n} c_{n,j} e^{im_{n,j}\theta},\qquad \theta\in\mathbb{T},$$
    where $c_{n,j}\geq 0$ and $m_{n,j}\in\mathbb{Z}$. In particular, $\sum_{j=0}^{q_n} c_{n,j}=\Phi_n(0)=1$. 
\end{lemma}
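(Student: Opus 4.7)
The natural approach is representation-theoretic. Write $M = G/K$ with $G$ the compact isometry group and $K$ the stabilizer of a basepoint. Since $(G,K)$ is a Gelfand pair, for each $n\in\mathbb{Z}_{\geq 0}$ there is an irreducible unitary representation $(\pi_n, V_n)$ of $G$ possessing an essentially unique unit $K$-fixed vector $v_n\in V_n$, and the spherical function is the diagonal matrix coefficient
$$\Phi_n(gK) = \langle \pi_n(g) v_n, v_n\rangle.$$
The normalization $\Phi_n(0)=1$ matches $\|v_n\|^2=1$.

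Next, I would identify the maximal torus $\mathbb{T}\subset M$ with a closed one-parameter subgroup $a: \mathbb{R}/2\pi\mathbb{Z}\to G$ (arclength rescaled to match the stated convention $\mathbb{T}=\mathbb{R}/2\pi\mathbb{Z}$) whose orbit through the basepoint is the closed geodesic. Because $\pi_n$ is finite-dimensional and $a(\mathbb{T})$ is compact abelian, $\pi_n|_{a(\mathbb{T})}$ decomposes orthogonally into integer weight spaces
$$V_n = \bigoplus_{m\in\mathbb{Z}} V_{n,m}, \qquad \pi_n(a(\theta))|_{V_{n,m}} = e^{im\theta}\cdot\mathrm{id}.$$
Writing $v_n = \sum_m v_{n,m}$ with $v_{n,m}\in V_{n,m}$ and using orthogonality of weight spaces,
$$\Phi_n(\theta) = \langle \pi_n(a(\theta)) v_n, v_n\rangle = \sum_{m\in\mathbb{Z}} e^{im\theta} \|v_{n,m}\|^2.$$
Indexing the non-zero terms by $j=0,\ldots,q_n$ yields the claimed expansion with $c_{n,j} := \|v_{n,m_{n,j}}\|^2 \geq 0$, and Parseval furnishes $\sum_j c_{n,j} = \|v_n\|^2 = 1 = \Phi_n(0)$.

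I do not foresee any substantial obstacle; the positivity comes for free once the matrix-coefficient realization is in hand, with the only bookkeeping being to verify that $a$ is a bona fide homomorphism out of $\mathbb{R}/2\pi\mathbb{Z}$ so that its characters are genuinely labeled by integers. An alternative and more computational route would be to derive the expansion directly from a positive Fourier series for $P_n^{(\alpha,\beta)}(\cos\theta)$, available for $0\leq\beta\leq\alpha$ via Koornwinder-type product formulas, but that is significantly heavier and less transparent than the representation-theoretic argument.
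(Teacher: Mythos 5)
Your argument is correct and is essentially the standard proof of the result the paper simply cites (Helgason's Proposition 9.4, Chapter III of \cite{Hel08}): realize $\Phi_n$ as the diagonal matrix coefficient of the spherical representation at its unit $K$-fixed vector and decompose that vector into weight components under the one-parameter torus subgroup, so the coefficients are squared norms and sum to $\|v_n\|^2=1$. No gap; this is the same route as the cited source.
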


Let $k(n)$ denote the dimension of the spherical representation of the isometry group $U$ of $M$ associated to $\Phi_{n}$. We need the following standard fact about $k(n)$:

\begin{lemma}\label{lem: k(n) bound}
   $k(n)$ is a polynomial in $n$ of degree $d-1$, so that 
\begin{align}
k(n)\simeq (n+1)^{d-1}, \qquad n\in\mathbb{Z}_{\geq 0}. 
\end{align}
\end{lemma}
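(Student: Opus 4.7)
The plan is to combine two standard ingredients: the Weyl dimension formula, which shows $k(n)$ is polynomial in $n$, and Weyl's law for the Laplacian on $M$, which pins down the degree to $d-1$.

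Because $M = U/K$ is a compact rank-one symmetric space, the Cartan--Helgason theorem identifies the spherical representations of $U$---those carrying a $K$-fixed vector---as exactly those with highest weight of the form $n\mu$ for a distinguished fundamental weight $\mu$ and $n \in \mathbb{Z}_{\geq 0}$. The Weyl dimension formula
$$k(n) = \prod_{\gamma \in \Sigma_U^+} \frac{\langle n\mu + \rho, \gamma \rangle}{\langle \rho, \gamma \rangle},$$
where $\rho$ is half the sum of positive roots, then expresses $k(n)$ as a product of factors each affine in $n$, so $k(n)$ is a polynomial in $n$ with positive leading coefficient.

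To locate the degree, I would appeal to Weyl's law for $-\Delta$ on $M$:
$$N(\Lambda) := \sum_{\lambda_n \leq \Lambda} k(n) \simeq \Lambda^d, \qquad \Lambda \to \infty.$$
From \eqref{eq: Phi_n} and the classical Jacobi differential equation one reads off $\Delta \Phi_n = -n(n+\alpha+\beta+1)\Phi_n$, so $\lambda_n \simeq n$. Substituting, $\sum_{n \leq CN} k(n) \simeq N^d$; on the other hand, if $k$ is a polynomial of degree $D$, the same sum grows like $N^{D+1}$. Matching exponents forces $D = d-1$, and hence $k(n) \simeq (n+1)^{d-1}$ (with the natural indexing convention in which only those $n$ for which $\Phi_n$ is nontrivial are counted, which is the convention of the excerpt).

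A purely computational alternative, which avoids invoking Cartan--Helgason, is a case-by-case verification using the classification of CROSSes into $\mathbb{S}^d$, $\mathbb{RP}^d$, $\mathbb{CP}^m$, $\mathbb{HP}^m$, and $\mathbb{OP}^2$. On each, $k(n)$ admits a classical closed-form expression---for instance $k(n) = \tfrac{(2n+d-1)(n+d-2)!}{(d-1)!\,n!}$ on $\mathbb{S}^d$---whose polynomial character and degree $d-1$ are visible by inspection. I do not anticipate a serious obstacle in either route; the statement is a textbook consequence of the harmonic analysis on CROSSes, and the effort really goes into selecting the correct references rather than into any delicate estimate.
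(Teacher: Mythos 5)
Your proposal is correct, and it overlaps with the paper's (very terse) proof in its first half: the paper also deduces the statement from the Weyl dimension formula together with the Cartan--Helgason characterization of spherical highest weights (Theorem 4.1 of Chapter V of Helgason), which is exactly your argument that $k(n)$ is a polynomial in $n$ with positive leading coefficient. Where you diverge is in pinning down the degree: the paper's intended route is to read the degree directly off the dimension formula, namely the degree equals the number of positive roots of $U$ not orthogonal to the spherical weight $\mu$, which for a rank-one symmetric space is $d-1$ (a short root count in types $B$, $D$, etc.). You instead invoke Weyl's law and match growth rates of the spectral counting function, which is a legitimate and rather robust way to avoid that root count; note, though, that this step silently uses two facts worth stating: (i) the spherical representations exhaust $L^2(U/K)$, each with multiplicity one (Frobenius reciprocity/Cartan--Helgason), so that $\sum_{\lambda_n\le \Lambda} k(n)$ really is the Weyl counting function; and (ii) the leading coefficient of $k$ is positive (immediate from $k(n)\ge 1$ for all $n$), so the partial sums of a degree-$D$ polynomial indeed grow like $\Lambda^{D+1}$. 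Your case-by-case alternative via the classification of CROSSes and the classical closed-form dimension formulas also works and is arguably the quickest fully explicit verification. What the paper's route buys is self-containedness within representation theory (no analytic input); what yours buys is that the degree identification becomes conceptual and classification-free, at the cost of importing Weyl's law.
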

\begin{proof}
    This is a consequence of the Weyl dimension formula combined with the characterization of the highest weight of spherical representations (Theorem 4.1 of Chapter V of \cite{Hel00}). 
\end{proof}

\subsection{Eigenfunctions}
Let $M=M_1\times\cdots\times M_r$ be a product of CROSSs. 
Each \(M_i\), being a symmetric space of compact type, may be written as a homogeneous quotient \(M_i\cong U_i/K_i\).  Using the Haar measure \(du_i\) on the compact group \(U_i\), we have the convolution of functions \(f,g\) on \(M_i\) by
\[
(f*g)(x_i)=\int_{U_i} f(u_iK_i)\,g(u_i^{-1}x_i)\,du_i,\qquad x_i\in M_i.
\]
These convolutions on each \( M_i \) together induce the convolution on the product  $M$.

For $f\in L^2(M)$, following Proposition 9.1 of Chapter III of \cite{Hel00}, we have the $L^2$ decomposition 
\begin{align}\label{eq: f=sum}
    f=\sum_{n_i\in\mathbb{Z}_{\geq 0},\ i=1,\ldots,r}P_{n_1,\ldots,n_r}f,
\end{align}
where $P_{n_1,\ldots,n_r}$ is the projection onto the space of \textit{joint eigenfunctions} of the Laplace--Beltrami operators $\Delta_i$ on $M_i$, such that 
$$\Delta_i P_{n_1,\ldots,n_r}f=-(n_i^2+a_in_i)P_{n_1,\ldots,n_r}f.$$
Here each $a_i\in\mathbb{Z}_{>0}$ is a constant associated to $M_i$. Importantly, spherical functions are such joint eigenfunctions, and these projection operators are simply convolution with spherical functions: 
\begin{align}\label{eq: joint projection}
    P_{n_1,\ldots,n_r}f=f*\prod_{i=1}^rk_i(n_i)\Phi_{i,n_i}, 
\end{align}
where $\Phi_{i,n_i}$ ($n_i\in\mathbb{Z}_{\geq 0}$) are the spherical functions on $M_i$, and $k_i(n_i)$ denotes the dimension of the corresponding spherical representation. In particular, we have 
\begin{align}\label{eq: spherical convolution}
\prod_{i=1}^rk_i(n_i)\Phi_{i,n_i}*\prod_{i=1}^rk_i(n_i)\Phi_{i,n_i}=\prod_{i=1}^rk_i(n_i)\Phi_{i,n_i}.
\end{align}
We establish the following restriction bounds of joint eigenfunctions on products of CROSSs. 
\begin{theorem}\label{thm: joint}
Let $S$ be a $k$-dimensional submanifold of a maximal flat of $M$, $k=0,1,\ldots,r$. 
Suppose that $d_1\leq d_2\leq\cdots\leq d_r$. Let 
$$\tau(d,p):=\left\{\begin{array}{ll}
-\frac1p, & p\geq\frac{4}{d-1},\\
-\frac{d-1}{4}, & 0<p\leq\frac{4}{d-1}.
\end{array}\right.$$
Let $f$ be a joint eigenfunction of all the $\Delta_i$, $i=1,\ldots,r$, such that 
$$\Delta_i f=-(n_i+a_in_i)^2f, \qquad\text{for some } n_i\in\mathbb{Z}_{\geq 0}.$$
Let $N^2=\sum_{i=1}^r n_i^2+a_in_i >1$.  Then  
\begin{align}\label{eq: joint}
    \|f\|_{L^p(S)}\lesssim  N^{\frac{d-r}{2}+\sum_{i=1}^k\tau(d_i,p) } \|f\|_{L^2(M)}, \qquad\text{for all }p\geq 2.
\end{align} 
\end{theorem}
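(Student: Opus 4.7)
The plan is to reduce to an $L^{p'}(S)\to L^p(S)$ operator-norm estimate for a $TT^*$ convolution operator, handle it factor-by-factor on product submanifolds using Theorem~\ref{thm: jac} and Minkowski's integral inequality, then leverage the positivity in Lemma~\ref{lem: Phi_n=sum} to extend to arbitrary submanifolds. Since $f=P_{n_1,\ldots,n_r}f=f*K_\Phi$ with $K_\Phi:=\prod_i k_i(n_i)\Phi_{i,n_i}$ (see \eqref{eq: joint projection}), a standard $TT^*$ argument reduces the theorem to
\[
\|TT^*_S\|_{L^{p'}(S)\to L^p(S)}\lesssim N^{d-r+2\sum_{i=1}^{k}\tau(d_i,p)},
\]
where $TT^*_S$ has kernel $K_\Phi|_{\mathbb{T}^r}(x-y)=\prod_i K_i(x_i-y_i)$ with $K_i(\theta):=k_i(n_i)\Phi_{i,n_i}(\theta)$. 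In the one-factor case $r=1$, combining \eqref{eq: Phi_n} with Theorem~\ref{thm: jac} and Lemma~\ref{lem: k(n) bound} yields
\[
\|T_{K_i}\|_{L^{p'}(\mathbb{T})\to L^p(\mathbb{T})}\lesssim k_i(n_i)\binom{n_i+\alpha_i}{n_i}^{-1}A(\alpha_i,p/2,n_i)\simeq n_i^{d_i-1+2\tau(d_i,p)},
\]
which is the 1D restriction bound after taking square roots.

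For a product submanifold $S=S_1\times\cdots\times S_r$ with each $\dim S_i\in\{0,1\}$, I would iterate the 1D estimate factor by factor. Fixing all coordinates but the $i$th, the function $x_i\mapsto f(x)$ remains an eigenfunction of $\Delta_i$ on $M_i$, so the 1D restriction bound applies fiberwise; the outer $L^p$-norm over the remaining factors can then be interchanged with the inner $L^2$-norm over $M_i$ by Minkowski's integral inequality, valid since $p\ge 2$. Iterating over all $r$ factors produces
\[
\|f\|_{L^p(S)}\lesssim\prod_{i:\dim S_i=1}n_i^{(d_i-1)/2+\tau(d_i,p)}\prod_{i:\dim S_i=0}n_i^{(d_i-1)/2}\,\|f\|_{L^2(M)},
\]
which equals $N^{(d-r)/2+\sum_{i=1}^k\tau(d_i,p)}$ when the 1D factors of $S$ are optimally allocated to the indices with the smallest $d_i$ (since $\tau(d_i,p)$ is more negative for smaller $d_i$), in agreement with the ordering $d_1\le\cdots\le d_r$.

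The main obstacle is handling arbitrary, not-necessarily-product submanifolds $S\subset\mathbb{T}^r$. Here I would invoke Lemma~\ref{lem: Phi_n=sum} to decompose
\[
K_\Phi(\theta)=\sum_J a_J\,e^{im_J\cdot\theta},\qquad a_J=\prod_i k_i(n_i)c_{i,j_i}\ge 0,\quad m_J=(m_{1,j_1},\ldots,m_{r,j_r}),
\]
on $\mathbb{T}^r$. The crucial feature is the \emph{positivity} of the coefficients $a_J$: replacing $K_\Phi$ by its pointwise absolute value $|K_\Phi|\le K_\Phi(0)=\prod_i k_i(n_i)$ would destroy all oscillation and yield only the crude bound $N^{d-r}$, losing the $\tau$-improvement entirely. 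Instead, positivity is precisely what allows one to preserve the 1D cancellation in each factor $K_i$ when convolving against the singular measure $g\,d\sigma_S$, effectively reducing the analysis of $TT^*_S$ to the product-type case treated above and recovering the product bound $\prod_i\|T_{K_i}\|_{L^{p'}(\mathbb{T})\to L^p(\mathbb{T})}$. Converting this heuristic into a rigorous operator-norm reduction from general $S$ back to the product-type factorization is the technical heart of the argument.
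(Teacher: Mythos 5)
Your $TT^*$ reduction and your treatment of the product-type case (Theorem~\ref{thm: jac} per factor, coefficient asymptotics, Minkowski with $p\ge p'$) match the paper's machinery, but the argument has a genuine gap exactly where you flag it: the passage from an arbitrary $k$-dimensional submanifold $S$ of the flat to the product-type convolution estimate is not a deferred technicality in the paper, it \emph{is} the proof, and your sketch of it would not work as stated. Two ingredients are missing. First, the paper locally parametrizes $S$ (after a finite open cover) by $k$ of the $r$ angular coordinates, $\underline{\theta}=(\theta_{i_1},\ldots,\theta_{i_k})\mapsto(\theta_1(\underline{\theta}),\ldots,\theta_r(\underline{\theta}))$, so the $TT^*$ kernel becomes an integral kernel on $V\subset\mathbb{T}^k$ with a smooth density $J\simeq 1$ (no singular measure enters), and in the $k$ parametrizing variables the kernel is an honest convolution kernel. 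Second, Lemma~\ref{lem: Phi_n=sum} is applied \emph{only} to the $r-k$ non-parametrizing factors, whose arguments $\theta_i(\underline{\theta})-\theta_i(\underline{\theta}')$ depend nonlinearly on the parameters: each resulting exponential $e^{im_{i,n_i,j_i}(\theta_i(\underline{\theta})-\theta_i(\underline{\theta}'))}$ splits as a unimodular function of $\underline{\theta}$ times one of $\underline{\theta}'$, so multiplying the kernel by it leaves the $L^{p'}\to L^p$ operator norm unchanged, and then nonnegativity of the $c_{i,n_i,j_i}$ together with $\sum_j c_{i,n_i,j_i}=1$ lets one sum by the triangle inequality, bounding the operator norm by that of the kernel $\widetilde{\mathscr{K}}$, which is precisely the product of $k$ Jacobi kernels on $\mathbb{T}^k$ times $\prod_i k_i(n_i)$. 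Your plan of expanding \emph{all} factors of $K_\Phi$ into exponentials discards the very Jacobi structure in the parametrizing directions that Theorem~\ref{thm: jac} must act on; the point of the positivity is to trivialize only the directions that are slaved to the parameters, not the whole kernel, and without the coordinate-parametrization step the reduction cannot even be formulated.

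Two smaller points. In your product-type step, the monotonicity claim is backwards: $\tau(d,p)=\max(-1/p,-(d-1)/4)$ is nonincreasing in $d$, i.e., more negative for \emph{larger} $d$; the theorem's exponent $\sum_{i=1}^k\tau(d_i,p)$ (smallest dimensions) is therefore the \emph{worst} case, and any allocation of the $k$ one-dimensional directions gives a bound $\le$ the claimed one --- which is what you need for general (not optimally placed) product factors, and what the paper invokes via $d_1\le\cdots\le d_r$. Also, since the $n_i$ need not be comparable to $N$, the conversion $\prod_i n_i^{e_i}\lesssim N^{\sum_i e_i}$ should be justified by noting that all exponents $e_i$ involved are nonnegative.
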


\begin{proof}
A maximal flat $E$ in $M$ is of the form 
$$E=\mathbb{T}_1\times\cdots\times\mathbb{T}_r,$$
where $\mathbb{T}_i$ is a maximal torus of $M_i$. Let $\theta_1,\ldots,\theta_r$ denote the corresponding angular variables on $E$. 
By passing to a finite open cover of $S$, 
we may assume that there exist $1\leq i_1<\cdots<i_k\leq r$ and an open subset 
$V$ of $\mathbb{T}_{i_1}\times\cdots\times \mathbb{T}_{i_k}$ 
such that the angular variables $\theta_{i_1},\ldots,\theta_{i_k}$ among $\theta_1,\ldots,\theta_r$ provide coordinates on $S$. 
Denote $\underline{\theta}:=(\theta_{i_1},\ldots,\theta_{i_k})$. Then $S$ is parametrized by the map
$$ V\ni\underline{\theta}\mapsto (\theta_1(\underline{\theta}),\ldots,\theta_r(\underline{\theta}))\in S\subset E.$$
Proving the bound \eqref{eq: joint} for joint eigenfunctions is equivalent to proving the $L^2(M)\to L^p(S)$ bound for the projection operators $P_{n_1,\ldots,n_r}$. 
By a $TT^*$ argument, and using \eqref{eq: joint projection}, \eqref{eq: spherical convolution} and the fact that the spherical functions are real-valued (as ensured by \eqref{eq: Phi_n}), 
it suffices to demonstrate the 
$N^{{d-r}+2\sum_{i=1}^k\tau(d_i,p)}$ bound for the $L^{p'}(V,J(\underline{\theta})\ d\underline{\theta})\to L^p(V,J(\underline{\theta})\ d\underline{\theta})$ norm of the operator 
$$P_{n_1,\ldots,n_r}P_{n_1,\ldots,n_r}^*g(\underline{\theta})=\int_V g(\underline{\theta}')\mathscr{K}(\underline{\theta}',\underline{\theta})J(\underline{\theta}')\ d\underline{\theta}',$$
where 
$$\mathscr{K}(\underline{\theta}',\underline{\theta})=\prod_{i=1}^rk_i(n_i)\Phi_{i,n_i}(\theta_i(\underline{\theta})-\theta_i(\underline{\theta}')),$$ 
and $J(\underline{\theta}')\ d\underline{\theta}'$ is the Riemannian measure on $S$, with $|J(\underline{\theta}')|\simeq 1$.

For $i\neq i_l$, $l=1,\ldots,k$, we use Lemma \ref{lem: Phi_n=sum} to write 
$$\Phi_{i,n_i}(\theta_i)=\sum_{j_i=0}^{q_{i,n_i}}c_{i,n_i,j_i}e^{im_{i,n_i,j_i}\theta_i},\qquad \theta_i\in \mathbb{T}_i,$$
where  $m_{i,n_i,j_i}\in\mathbb{Z}$, and 
\begin{align}\label{eq: c_i}
    c_{i,n_i,j_i}\geq 0, \qquad \sum_{j=0}^{q_{i,n_i}}c_{i,n_i,j_i}=1.
\end{align}
Then we may rewrite the kernel $\mathscr{K}(\underline{\theta}',\underline{\theta})$ and obtain
\begin{align}\label{eq: K=sum}
    \mathscr{K}(\underline{\theta}',\underline{\theta})=&\left(\prod_{i\notin\{i_1,\ldots,i_k\}}\sum_{j_i=0}^{q_{i,n_i}}c_{i,n_i,j_i}e^{im_{i,n_i,j_i}(\theta_i(\underline{\theta})-\theta_i(\underline{\theta}'))}\right) \widetilde{\mathscr{K}}(\underline{\theta}',\underline{\theta}),
\end{align}
where 
\begin{align*}
    \widetilde{\mathscr{K}}(\underline{\theta}',\underline{\theta})&=\prod_{i=1}^rk_i(n_i)\cdot \prod_{l=1}^k\Phi_{i_l,n_{i_l}}(\theta_{i_l}-\theta_{i_l}')\\
&=\prod_{i=1}^rk_i(n_i)\cdot \prod_{l=1}^k \binom{n_{i_l}+\alpha_{i_l}}{n_{i_l}}^{-1}P_{n_{i_l}}^{(\alpha_{i_l},\beta_{i_l})}(\cos(\theta_{i_l}-\theta_{i_l}')),
\end{align*}
with $0\leq\beta_{i_l}\leq \alpha_{i_l}=(d_{i_l}-2)/2$. 

Applying \eqref{eq: c_i}, \eqref{eq: K=sum}, and $|J(\underline{\theta}')|\simeq 1$, it suffices to get the same bound for the $L^{p'}(V,\ d\underline{\theta})\to L^p(V,\ d\underline{\theta})$ norm of the operator 
\begin{align*}
    \mathcal{A}: g\mapsto \int_V g(\underline{\theta}')\widetilde{\mathscr{K}}(\underline{\theta}',\underline{\theta})\ d\underline{\theta}'.
\end{align*}
Furthermore, extending $g$ by zero outside of $V$ in $\mathbb{T}^k:=\mathbb{T}_{i_1}\times\cdots\times\mathbb{T}_{i_k}$, it suffices to 
get the same bound for the $L^{p'}(\mathbb{T}^k,\ d\underline{\theta})\to L^p(\mathbb{T}^k,\ d\underline{\theta})$ norm of the operator 
\begin{align*}
    \widetilde{\mathcal{A}}: g\mapsto \int_{\mathbb{T}^k} g(\underline{\theta}')\widetilde{\mathscr{K}}(\underline{\theta}',\underline{\theta})\ d\underline{\theta}',
\end{align*}
as $\|\mathcal{A}\|_{L^{p'}(V,\ d\underline{\theta})\to L^p(V,\ d\underline{\theta})}\leq \|\widetilde{\mathcal{A}}\|_{L^{p'}(\mathbb{T}^k,\ d\underline{\theta})\to L^p(\mathbb{T}^k,\ d\underline{\theta})}$. 
Then the above operator is convolution on $\mathbb{T}^k$ with a product of Jacobi polynomials. 
First observe that by the standard asymptotics 
\begin{align}\label{eq: binom asymp}
    \binom{n+\alpha}{n} \sim \frac{n^{\alpha}}{\Gamma(\alpha+1)}, \qquad n \to \infty,
\end{align}
and Lemma \ref{lem: k(n) bound}, we have
\begin{align}\label{eq: final 1}
    \prod_{i=1}^rk_i(n_i)\cdot \prod_{l=1}^k \binom{n_{i_l}+\alpha_{i_l}}{n_{i_l}}^{-1}&\lesssim \prod_{i=1}^r(n_i+1)^{d_i-1}\prod_{l=1}^k
    (n_{i_l}+1)^{-\alpha_{i_l}}.
\end{align}
Then noting that $p\geq p'$ as $p\geq 2$, we apply the Minkowski inequality along with Theorem \ref{thm: jac}, to obtain 
\begin{align}\label{eq: final 2}
\left\|f*    \prod_{l=1}^k P_{n_{i_l}}^{(\alpha_{i_l},\beta_{i_l})}(\cos(\theta_{i_l}))\right\|_{L^p(\mathbb{T}^k)}
&\lesssim \prod_{l=1}^k \|T_{n_{i_l}}^{\alpha_{i_l},\beta_{i_l}}\|_{L^{p'}(\mathbb{T})\to L^{p}(\mathbb{T})}\|f\|_{L^{p'}(\mathbb{T}^k)}\nonumber\\
&\lesssim \prod_{l=1}^k(n_{i_l}+1)^{\alpha_{i_l}+2\tau(d_{i_l},p)}\|f\|_{L^{p'}(\mathbb{T}^k)}. 
\end{align}
Here in the last inequality we used $\alpha_{i_l}=(d_{i_l}-2)/2$. Combining \eqref{eq: final 1} and \eqref{eq: final 2} as well as the facts that $\tau(d,p)\geq -(d-1)/{4}$ and $n_i\lesssim N$, 
we conclude that 
$$\|\widetilde{\mathcal{A}}\|_{L^{p'}(\mathbb{T}^k)\to L^p(\mathbb{T}^k)}\lesssim N^{{d-r}+2\sum_{l=1}^k\tau(d_{i_l},p)}\leq N^{{d-r}+2\sum_{i=1}^k\tau(d_{i},p)}.$$
Here in the last inequality we applied $d_1\leq \cdots\leq d_r$. The proof is now complete.

\end{proof}

We are ready to finish the proof of Theorem \ref{thm: prod}.

\begin{proof}[Proof of Theorem \ref{thm: prod}]
Suppose $\Delta f=-N^2f$, $N>1$. 
By \eqref{eq: f=sum}, and a standard counting estimate \cite{Gro85}, we get
\begin{align*}
    |f|&\leq \left|\left\{(n_1,\ldots,n_r)\in\mathbb{Z}^r_{\geq 0}: \sum_{i=1}^r n_i^2+a_in_i=N^2\right\}\right|^{\frac{1}{2}}\|P_{n_1,\ldots,n_r}f\|_{l^2_{n_1,\ldots,n_r}}\\
    &\lesssim N^{\frac{r}{2}-1+\varepsilon}\|P_{n_1,\ldots,n_r}f\|_{l^2_{n_1,\ldots,n_r}},
\end{align*}
which implies by an application of the Minkowski inequality, that for $p\geq 2$, 
$$\|f\|_{L^p(M)}\lesssim N^{\frac{r}{2}-1+\varepsilon}\|\|P_{n_1,\ldots,n_r}f\|_{L^p(M)}\|_{l^2_{n_1,\ldots,n_r}},$$
and the $\varepsilon$-loss may be removed when $r\geq 5$. 

The estimates \eqref{eq: general} and \eqref{eq: no loss} of Theorem \ref{thm: prod} are now consequences of Theorem \ref{thm: joint}. We leave the proof of sharpness of \eqref{eq: no loss} to Section~\ref{sec: sharpness}.
\end{proof}

\section{Proof of Theorem \ref{thm: jac}}\label{sec: proof of thm jac}

In addition to \eqref{eq: A(delta,p,n)}, it will also be convenient to denote 
$$\widetilde{A}(\delta,p,n):=\left\{\begin{array}{lll}
      (n+1)^{\delta-\frac{1}{p}},  & p>\frac{1}{\delta+\frac12}, \\
        (n+1)^{-\frac{1}{2} } \log^{\delta+\frac12} (n+2), & p=\frac{1}{\delta+\frac12}, \\
         (n+1)^{-\frac{1}{2}},  & 0<p<\frac{1}{\delta+\frac12}.
   \end{array}\right.   
   $$
We first demonstrate Theorem \ref{thm: jac} except for the kink point: 
\begin{proposition}\label{prop}
For all the three cases in Theorem \ref{thm: jac}, we have 
\[
\|T_n^{\alpha,\beta}\|_{L^{p'}(\mathbb{T}) \to L^p(\mathbb{T})} \lesssim \widetilde{A}(\alpha, p/2, n), \quad \text{for all } p \ge 2.
\]  
\end{proposition}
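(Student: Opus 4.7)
The plan is to deduce Proposition \ref{prop} from two standard ingredients: Young's convolution inequality, which converts the operator norm into an $L^{p/2}$ norm of the kernel, and direct pointwise bounds for $P_n^{(\alpha,\beta)}(\cos\theta)$ coming from Szeg\"o's asymptotics. Concretely, taking target exponent $p$ and one factor exponent $p'$ in Young's inequality on $\mathbb{T}$ forces the other factor to live in $L^{p/2}$, so that
\[
\|T_n^{\alpha,\beta}\|_{L^{p'}(\mathbb{T})\to L^p(\mathbb{T})}\leq \bigl\|P_n^{(\alpha,\beta)}(\cos\theta)\bigr\|_{L^{p/2}(\mathbb{T})}.
\]
Since $p\ge 2$ yields $p/2\ge 1$, this is a legitimate Young estimate, and the task reduces to showing
\[
\bigl\|P_n^{(\alpha,\beta)}(\cos\theta)\bigr\|_{L^q(\mathbb{T})}\lesssim \widetilde{A}(\alpha,q,n)
\]
for $q\ge 1$ and $0\le\beta\le\alpha$. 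The reduction is uniform in $\beta$, so the trichotomy of cases (1)--(3) enters nowhere here; those cases will become relevant only for the log-removal step carried out in the remainder of Section \ref{sec: proof of thm jac}.

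Next I would invoke the classical three-regime pointwise bounds for Jacobi polynomials from \cite{Sze75}: on the peak $|\theta|\lesssim 1/n$ one has $|P_n^{(\alpha,\beta)}(\cos\theta)|\lesssim n^\alpha$; on the antipodal peak $|\theta-\pi|\lesssim 1/n$ one has $|P_n^{(\alpha,\beta)}(\cos\theta)|\lesssim n^\beta$; and on the oscillatory middle $c/n\le\theta\le\pi-c/n$, Hilb's asymptotic delivers the envelope
\[
\bigl|P_n^{(\alpha,\beta)}(\cos\theta)\bigr|\lesssim n^{-1/2}\bigl(\sin\tfrac{\theta}{2}\bigr)^{-\alpha-1/2}\bigl(\cos\tfrac{\theta}{2}\bigr)^{-\beta-1/2}.
\]
Since $\beta\le\alpha$, both the $\theta=\pi$ peak and the $(\cos\tfrac{\theta}{2})$-singularity in the middle region are dominated uniformly by their counterparts at $\theta=0$, so effectively only the $\alpha$-side matters.

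I would then assemble the $L^q$ norm region by region. The near-$0$ peak contributes $\simeq n^{q\alpha-1}$, while the middle contributes $n^{-q/2}\int_{1/n}^{\pi/2}\theta^{-q(\alpha+1/2)}\,d\theta$, which is $\simeq n^{q\alpha-1}$, $\simeq n^{-q/2}\log n$, or $\simeq n^{-q/2}$ according as $q(\alpha+\tfrac12)$ is greater than, equal to, or less than $1$. Adding the two contributions and taking the $q$-th root reproduces the three-branch expression defining $\widetilde{A}(\alpha,q,n)$: the polynomial rate $(n+1)^{\alpha-1/q}$ above the kink, the log bound $(n+1)^{-1/2}\log^{\alpha+1/2}(n+2)$ at the kink, and the flat rate $(n+1)^{-1/2}$ below. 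Substituting $q=p/2$ and feeding the result back into the Young bound completes the proof.

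No step is a genuine obstacle; the argument is essentially bookkeeping of standard Jacobi asymptotics. The only mildly delicate point is the kink $q(\alpha+\tfrac12)=1$, where the peak and the oscillatory tail balance and the $\log n$ factor is forced out of the integral---precisely the logarithm that $\widetilde{A}$ tolerates and that $A$ does not, whose removal belongs to the next stage of the argument rather than to Proposition \ref{prop}.
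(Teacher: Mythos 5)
Your argument is correct and is essentially the paper's own proof: both reduce, via Young's convolution inequality with the kernel in $L^{p/2}(\mathbb{T})$, to the bound $\|P_n^{(\alpha,\beta)}(\cos\theta)\|_{L^{q}}\lesssim\widetilde{A}(\alpha,q,n)$, obtained from Szeg\"o's Hilb-type asymptotics near the peaks and the oscillatory envelope in the middle range, with the $\beta$-side contributions absorbed since $\beta\le\alpha$. The only (immaterial) difference is that you apply Young once globally and then decompose the kernel's $L^{p/2}$ norm, whereas the paper decomposes the kernel first and applies Young to each piece.
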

\begin{proof}

We decompose the convolution kernel $P^{\alpha,\beta}_n(\cos\theta)$ according to the value of $\theta$, and it suffices to restrict attention to $\theta\in[0,\pi]$. Let $c$ be a fixed positive constant. Denote 
$$\widetilde{n}:=n+\frac{\alpha+\beta+1}{2}$$
and 
$$\gamma:=-\frac\pi{2}\left(\alpha+\frac12\right).$$ 

\textbf{Case 1.} $c(n+1)^{-1}\leq \theta\leq \pi-c(n+1)^{-1}$. We use (8.21.18) of \cite{Sze75} as follows:
\begin{align*}
    &P_{n}^{(\alpha,\beta)}(\cos\theta)=\\
    &\pi^{-\frac{1}{2}}n^{-\frac{1}{2}}\left(\sin\frac{\theta}{2}\right)^{-\alpha-\frac12}\left(\cos\frac{\theta}{2}\right)^{-\beta-\frac12}
    [\cos(\widetilde{n}\theta+\gamma)+(n\sin\theta)^{-1}O(1)],\qquad n\to\infty. 
\end{align*}

This implies  
\begin{align}\label{eq: bound for intermediate theta}
    |P_{n}^{(\alpha,\beta)}(\cos\theta)|\lesssim (n+1)^{-\frac{1}{2}}\left(\sin\frac{\theta}{2}\right)^{-\alpha-\frac12}\left(\cos\frac{\theta}{2}\right)^{-\beta-\frac12}.
\end{align}
    An elementary calculation then gives 
    \begin{align*}
         \|P_{n}^{(\alpha,\beta)}(\cos\theta)\|_{L^{p}_\theta([c(n+1)^{-1},\pi/2])}
   \lesssim \widetilde{A}(\alpha,{p},n),
    \end{align*}
and 
\begin{align}\label{eq: Pn beta}
    \|P_{n}^{(\alpha,\beta)}(\cos\theta)\|_{L^{p}_\theta([\pi/2,\pi-c(n+1)^{-1}])}
   \lesssim \widetilde{A}(\beta,{p},n).
\end{align} 
Noting $\widetilde{A}(\beta,{p},n)\leq \widetilde{A}(\alpha,{p},n)$, the above two estimates yield the desired bound of the convolution operator with the kernel $P^{\alpha,\beta}_n(\cos\theta)$ restricted to the interval $[c(n+1)^{-1},\pi-c(n+1)^{-1}]$, via an application of Young's convolution inequality. 
   
\textbf{Case 2.} $0<\theta\leq c(n+1)^{-1}$ or $\pi-c(n+1)^{-1}\leq\theta<\pi$. For the former, we use (8.21.17) of \cite{Sze75} as follows: 
\begin{align}
    &\left(\sin\frac{\theta}{2}\right)^{\alpha}\left(\cos\frac{\theta}{2}\right)^{\beta}P_n^{(\alpha,\beta)}(\cos\theta)\nonumber \\
=&\widetilde{n}^{-\alpha}\frac{\Gamma(n+\alpha+1)}{n!}\left(\frac{\theta}{\sin\theta}\right)^\frac12 J_\alpha(\widetilde{n}\theta)+ \theta^{\alpha+2}O(n^\alpha), \qquad n\to\infty.\label{eq: small theta}
\end{align}
Here $J_\alpha$ stands for the Bessel function of the first kind of order $\alpha$, as defined in (1.71.1) of \cite{Sze75}. By (1.71.10) of \cite{Sze75}, we have the asymptotics, 
$$J_\alpha(x)\sim x^\alpha, \qquad x\to 0+.$$
We also need the classical asymptotics for Gamma functions:
$$\frac{\Gamma(n+\alpha+1)}{n!}\sim n^{\alpha}, \qquad n\to\infty.$$
Apply the above two asymptotics to \eqref{eq: small theta}, we get 
$$|P_{n}^{(\alpha,\beta)}(\cos\theta)|\lesssim (n+1)^\alpha,\qquad 0<\theta<c(n+1)^{-1}.$$
This implies 
\begin{align}\label{eq: alpha-1/p}
    \|P_{n}^{(\alpha,\beta)}(\cos\theta)\|_{L^p_\theta((0,c(n+1)^{-1}])}
   \lesssim (n+1)^{\alpha-\frac1p},\qquad \text{for all }p>0.
\end{align}
A similar approach works for the other case $\pi-c(n+1)^{-1}\leq\theta<\pi$: combining (8.21.17) and (4.1.3) of \cite{Sze75}, we get 
\begin{align*}
    &\left(\sin\frac{\theta}{2}\right)^{\alpha}\left(\cos\frac{\theta}{2}\right)^{\beta}P_n^{(\alpha,\beta)}(\cos\theta)\\
=&(-1)^n \widetilde{n}^{-\beta}\frac{\Gamma(n+\beta+1)}{n!}\left(\frac{\pi-\theta}{\sin\theta}\right)^{\frac12} J_\beta(\widetilde{n}(\pi-\theta))+ (\pi-\theta)^{\beta+2}O(n^\beta),\qquad n\to\infty,
\end{align*}
which implies, in a similar manner as above, that 
\begin{align}\label{eq: beta - 1/p}
    \|P_{n}^{(\alpha,\beta)}(\cos\theta)\|_{L^p_\theta([\pi-c(n+1)^{-1},\pi))}
   \lesssim (n+1)^{\beta-\frac1p}\leq (n+1)^{\alpha-\frac1p}, \qquad \text{for all }p>0. 
\end{align}
Again by an application of Young's inequality, \eqref{eq: alpha-1/p} and \eqref{eq: beta - 1/p} together yield the desired bound of the convolution operator with the kernel $P_{n}^{(\alpha,\beta)}(\cos\theta)$ restricted to the intervals $(0,c(n+1)^{-1}]$ and $[\pi-c(n+1)^{-1},\pi)$. This completes the proof. 
\end{proof}
\begin{remark}
    The above proof yields the sharp $L^p$ estimates of Jacobi polynomials: for $0\leq\beta\leq\alpha$, we have 
    $$\|P_{n}^{(\alpha,\beta)}(\cos\theta)\|_{L^p_\theta([0,2\pi])}\lesssim\widetilde{A}(\alpha,p,n).$$
    In particular, 
    \begin{align}\label{eq: Linfty of Jacobi}
        \|P_{n}^{(\alpha,\beta)}\|_{L^\infty([-1,1])}\lesssim (n+1)^\alpha. 
    \end{align}
\end{remark}

Finally, we eliminate the logarithmic factor present in Proposition \ref{prop} for the cases relevant to Theorem \ref{thm: jac}.

\begin{proof}[Proof of Theorem \ref{thm: jac}]
Note that Proposition \ref{prop} already covers the case $\alpha>1/2$ of Theorem \ref{thm: jac}. For the case $0\leq\beta\leq \alpha<1/2$, it suffices to prove the desired bound 
$$\|T^{(\alpha,\beta)}_n\|_{L^{p'}(\mathbb{T})\to L^p(\mathbb{T})}\lesssim (n+1)^{\frac12}$$ at the kink point $p={4}/{(2\alpha+1)}>2$. To accomplish this, we adjust the proof of Proposition~\ref{prop}: 
the only change needed is to apply the Hardy--Littlewood--Sobolev inequality 
in place of Young's convolution inequality to \eqref{eq: bound for intermediate theta} 
over the interval $[c(n+1)^{-1},\pi/2]$, as well as over the interval 
$[\pi/2,\pi-c(n+1)^{-1}]$ if $\beta=\alpha$; if $\beta<\alpha$, the original application of Young's inequality over the interval 
$[\pi/2,\pi-c(n+1)^{-1}]$ via \eqref{eq: Pn beta} remains sufficient. 
For the remaining case $\alpha=\beta=1/2$, we use the explicit formula (4.1.7) of \cite{Sze75},
$$P^{(\frac12,\frac12)}_n(\cos\theta)=\binom{n+\frac12}{n}\frac{\sin (n+1)\theta}{(n+1)\sin \theta}.$$
Noting \eqref{eq: binom asymp}, the desired bound for $\|T^{(\frac12,\frac12)}_n\|_{L^{2}(\mathbb{T})\to L^2(\mathbb{T})}$ results from the $L^2$-boundedness of the convolution operator with the Dirichlet kernel. The proof is now complete.
\end{proof}

\section{Sharpness of \eqref{eq: no loss}}\label{sec: sharpness}
Fix an origin $e=(e_1,\ldots,e_r)$ of $M=M_1\times\cdots\times M_r$, and take $S$ to be any open subset of $\mathbb{T}_1 \times \cdots \times \mathbb{T}_k\times\{e_{k+1}\}\times\cdots\times \{e_{r}\}$ containing $e$. 
For $N>1$, take 
$$\Lambda:=\{(n_1,\ldots,n_r)\in\mathbb{Z}_{\geq 0}^r: n_1\geq \cdots\geq n_r\geq \frac{n_1}{2},\ \sum_{i=1}^rn_i^2+a_in_i=N^2\},$$
and then take 
\begin{align}\label{eq: maximizer}
    f(\theta_1,\ldots,\theta_r)=\sum_{(n_1,\ldots,n_r)\in \Lambda}\prod_{i=1}^r\sqrt{k_i(n_i)}\, \Phi_{i,n_i}(\theta_i).
\end{align}
As the spherical functions $\Phi_{i,n_i}$ are matrix entries of the associated irreducible spherical representations expressed via unit vectors (see Theorem 4.3 of Chapter V of \cite{Hel00}), the Schur orthogonality relations yield orthogonality between $\Phi_{i,n_i}$ for different $n_i$, and 
$$\|\sqrt{k_i(n_i)}\, \Phi_{i,n_i}\|_{L^2(M_i)}=1,$$
which together imply 
\begin{align}\label{eq: f L2}
    \|f\|_{L^2(M)}=|\Lambda|^{\frac12}.
\end{align}
We need the following derivative estimate for spherical functions. 
\begin{lemma}
  For spherical functions $\Phi_n(\theta)$ ($n\in\mathbb{Z}_{\geq 0}$) on a CROSS, we have the pointwise estimate 
    $$|\Phi_{n}'(\theta)|\lesssim (n+1)^2\sin\theta, \qquad\theta\in\mathbb{T}.$$
    In particular, this implies that 
    \begin{align}\label{eq: Phi_n close to 1}
        |\Phi_n(\theta)-1|\ll 1,\qquad\text{if } |\theta|\ll \frac{1}{n+1}. 
    \end{align}
\end{lemma}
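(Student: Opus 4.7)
The plan is to differentiate the explicit Jacobi representation \eqref{eq: Phi_n} and then reduce the problem to the uniform bound \eqref{eq: Linfty of Jacobi} already established earlier in the paper.

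First I would apply the classical differentiation formula for Jacobi polynomials,
\[
\frac{d}{dx}P_n^{(\alpha,\beta)}(x)=\frac{n+\alpha+\beta+1}{2}\,P_{n-1}^{(\alpha+1,\beta+1)}(x),
\]
(cf.\ (4.21.7) of \cite{Sze75}). Combined with \eqref{eq: Phi_n} and the chain rule, this gives
\[
\Phi_n'(\theta)=-\binom{n+\alpha}{n}^{-1}\,\frac{n+\alpha+\beta+1}{2}\,P_{n-1}^{(\alpha+1,\beta+1)}(\cos\theta)\,\sin\theta.
\]
Next, since $0\leq\beta+1\leq\alpha+1$, the sharp sup-norm bound \eqref{eq: Linfty of Jacobi} (with $\alpha$ replaced by $\alpha+1$) yields
\[
\bigl|P_{n-1}^{(\alpha+1,\beta+1)}(\cos\theta)\bigr|\lesssim (n+1)^{\alpha+1}.
\]
Combining this with the asymptotics $\binom{n+\alpha}{n}^{-1}\sim \Gamma(\alpha+1)\,n^{-\alpha}$ from \eqref{eq: binom asymp} and the factor $(n+\alpha+\beta+1)/2\lesssim n+1$ produces
\[
|\Phi_n'(\theta)|\lesssim n^{-\alpha}\cdot (n+1)\cdot (n+1)^{\alpha+1}\cdot |\sin\theta|\lesssim (n+1)^{2}\sin\theta,
\]
which is the claimed pointwise estimate (for $\theta\in[0,\pi]$; the full statement on $\mathbb{T}$ follows by the symmetry $\Phi_n(-\theta)=\Phi_n(\theta)$, which is immediate from the fact that $P_n^{(\alpha,\beta)}(\cos\theta)$ is even in $\theta$).

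For the consequence \eqref{eq: Phi_n close to 1}, I would simply integrate from $0$. Since $\Phi_n(0)=1$, the fundamental theorem of calculus gives
\[
|\Phi_n(\theta)-1|\leq \int_0^{|\theta|}|\Phi_n'(s)|\,ds\lesssim (n+1)^2\int_0^{|\theta|}\sin s\,ds\leq (n+1)^2\,|\theta|^2,
\]
so when $|\theta|\ll 1/(n+1)$ the right-hand side is $\ll 1$, as claimed. No step here looks like a genuine obstacle: the only subtle point is the book-keeping of which $\alpha$ to feed into \eqref{eq: Linfty of Jacobi} after one differentiation, and the cancellation between the gain $n^{-\alpha}$ from the normalization and the loss $n^{\alpha+1}$ from the shifted Jacobi polynomial, which together leave exactly the factor $(n+1)^2$ required.
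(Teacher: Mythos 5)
Your argument is correct and is essentially identical to the paper's proof: both differentiate \eqref{eq: Phi_n} via Szeg\H{o}'s formula (4.21.7) to get $\Phi_n'(\theta)=-\frac{\sin\theta}{2}(n+\alpha+\beta+1)\binom{n+\alpha}{n}^{-1}P^{(\alpha+1,\beta+1)}_{n-1}(\cos\theta)$, then combine \eqref{eq: binom asymp} with the sup-norm bound \eqref{eq: Linfty of Jacobi} applied to the shifted parameters, and obtain \eqref{eq: Phi_n close to 1} by integrating from $\theta=0$. Your version just spells out the bookkeeping the paper leaves implicit.
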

\begin{proof}

    By (4.21.7) of \cite{Sze75} and \eqref{eq: Phi_n}, we have 
    $$\Phi_n'(\theta)=-\frac{\sin\theta}{2}(n+\alpha+\beta+1)\binom{n+\alpha}{n}^{-1}P^{(\alpha+1,\beta+1)}_{n-1}(\cos\theta).$$
   Noting \eqref{eq: binom asymp} and \eqref{eq: Linfty of Jacobi}, the result follows. 
\end{proof}

Note that  $n_i\sim N$ for all $i$ whenever $(n_1,\ldots,n_r)\in\Lambda$. 
By \eqref{eq: Phi_n close to 1} and Lemma \ref{lem: k(n) bound}, if $|\theta_i|\ll 1/N$ for all $i$, then for the chosen $f$ in \eqref{eq: maximizer}, we have 
$$|f(\theta_1,\ldots,\theta_r)|\gtrsim \sum_{(n_1,\ldots,n_r)\in \Lambda}\prod_{i=1}^r\sqrt{k_i(n_i)}\gtrsim N^{\frac{d-r}{2}}|\Lambda|.$$
The above estimate further implies that 
$$\|f\|_{L^p(S)}\gtrsim N^{\frac{d-r}{2}-\frac{k}{p}}|\Lambda|,$$
which together with \eqref{eq: f L2} yields
$$\frac{\|f\|_{L^p(S)}}{\|f\|_{L^2(M)}}\gtrsim N^{\frac{d-r}{2}-\frac{k}{p}}|\Lambda|^{\frac12}.$$
When $r\geq 5$, by the equidistribution of lattice points on $r$-dimensional spheres as established by Pommerenke \cite{Pom59}, it holds 
$|\Lambda| \gtrsim N^{r-2}$ whenever $\Lambda\neq\varnothing$, which yields the sharpness of \eqref{eq: no loss}.

\bibliographystyle{acm}

\bibliography{arxiv}

@article {BGT07,
    AUTHOR = {Burq, N. and G\'erard, P. and Tzvetkov, N.},
     TITLE = {Restrictions of the {L}aplace-{B}eltrami eigenfunctions to
              submanifolds},
   JOURNAL = {Duke Math. J.},
  FJOURNAL = {Duke Mathematical Journal},
    VOLUME = {138},
      YEAR = {2007},
    NUMBER = {3},
     PAGES = {445--486},
      ISSN = {0012-7094,1547-7398},
   MRCLASS = {58J50 (35P15)},
  MRNUMBER = {2322684},
MRREVIEWER = {Nelia\ Charalambous},
       DOI = {10.1215/S0012-7094-07-13834-1},
       URL = {https://doi.org/10.1215/S0012-7094-07-13834-1},
}

@article {CS14,
    AUTHOR = {Chen, Xuehua and Sogge, Christopher D.},
     TITLE = {A few endpoint geodesic restriction estimates for
              eigenfunctions},
   JOURNAL = {Comm. Math. Phys.},
  FJOURNAL = {Communications in Mathematical Physics},
    VOLUME = {329},
      YEAR = {2014},
    NUMBER = {2},
     PAGES = {435--459},
      ISSN = {0010-3616,1432-0916},
   MRCLASS = {47A05 (53C22 58J50)},
  MRNUMBER = {3210140},
MRREVIEWER = {Nils\ Waterstraat},
       DOI = {10.1007/s00220-014-1959-3},
       URL = {https://doi.org/10.1007/s00220-014-1959-3},
}

@book {Gro85,
    AUTHOR = {Grosswald, Emil},
     TITLE = {Representations of integers as sums of squares},
 PUBLISHER = {Springer-Verlag, New York},
      YEAR = {1985},
     PAGES = {xi+251},
      ISBN = {0-387-96126-7},
   MRCLASS = {11-01 (11D85 11E25 11P05)},
  MRNUMBER = {803155},
MRREVIEWER = {Harvey\ Cohn},
       DOI = {10.1007/978-1-4613-8566-0},
       URL = {https://doi.org/10.1007/978-1-4613-8566-0},
}

@book {Hel00,
    AUTHOR = {Helgason, Sigurdur},
     TITLE = {Groups and geometric analysis},
    SERIES = {Mathematical Surveys and Monographs},
    VOLUME = {83},
      NOTE = {Integral geometry, invariant differential operators, and
              spherical functions,
              Corrected reprint of the 1984 original},
 PUBLISHER = {American Mathematical Society, Providence, RI},
      YEAR = {2000},
     PAGES = {xxii+667},
      ISBN = {0-8218-2673-5},
   MRCLASS = {22-02 (22E30 22E46 43A85 43A90 44A12 53-02 58-02)},
  MRNUMBER = {1790156},
       DOI = {10.1090/surv/083},
       URL = {https://doi.org/10.1090/surv/083},
}

@book {Hel08,
    AUTHOR = {Helgason, Sigurdur},
     TITLE = {Geometric analysis on symmetric spaces},
    SERIES = {Mathematical Surveys and Monographs},
    VOLUME = {39},
   EDITION = {Second},
 PUBLISHER = {American Mathematical Society, Providence, RI},
      YEAR = {2008},
     PAGES = {xviii+637},
      ISBN = {978-0-8218-4530-1},
   MRCLASS = {22E46 (22E30 43A85 53C35 58J60)},
  MRNUMBER = {2463854},
MRREVIEWER = {Jacques\ Faraut},
       DOI = {10.1090/surv/039},
       URL = {https://doi.org/10.1090/surv/039},
}

@article {Hu09,
    AUTHOR = {Hu, Rui},
     TITLE = {{$L^p$} norm estimates of eigenfunctions restricted to
              submanifolds},
   JOURNAL = {Forum Math.},
  FJOURNAL = {Forum Mathematicum},
    VOLUME = {21},
      YEAR = {2009},
    NUMBER = {6},
     PAGES = {1021--1052},
      ISSN = {0933-7741,1435-5337},
   MRCLASS = {35P20 (35P30 35R01 58J50)},
  MRNUMBER = {2574146},
MRREVIEWER = {Roman\ V.\ Romanov},
       DOI = {10.1515/FORUM.2009.051},
       URL = {https://doi.org/10.1515/FORUM.2009.051},
}

@article {HZ21,
    AUTHOR = {Huang, Xiaoqi and Zhang, Cheng},
     TITLE = {Restriction of toral eigenfunctions to totally geodesic
              submanifolds},
   JOURNAL = {Anal. PDE},
  FJOURNAL = {Analysis \& PDE},
    VOLUME = {14},
      YEAR = {2021},
    NUMBER = {3},
     PAGES = {861--880},
      ISSN = {2157-5045,1948-206X},
   MRCLASS = {58J50 (11P21 35P20)},
  MRNUMBER = {4259875},
MRREVIEWER = {Andrea\ Sartori},
       DOI = {10.2140/apde.2021.14.861},
       URL = {https://doi.org/10.2140/apde.2021.14.861},
}

@article {Pom59,
    AUTHOR = {Pommerenke, Ch.},
     TITLE = {{\"U}ber die {G}leichverteilung von {G}itterpunkten auf
              {$m$}-dimensionalen {E}llipsoiden},
   JOURNAL = {Acta Arith.},
  FJOURNAL = {Polska Akademia Nauk. Instytut Matematyczny. Acta Arithmetica},
    VOLUME = {5},
      YEAR = {1959},
     PAGES = {227--257},
      ISSN = {0065-1036},
   MRCLASS = {10.25 (10.45)},
  MRNUMBER = {122788},
       DOI = {10.4064/aa-5-2-227-257},
       URL = {https://doi.org/10.4064/aa-5-2-227-257},
}

@misc{Rez04,
  author       = {Andre Reznikov},
  title        = {Norms of geodesic restrictions for eigenfunctions on hyperbolic surfaces and representation theory},
  howpublished = {arXiv preprint math/0403437},
  year         = {2004},
  eprint       = {math/0403437},
  archivePrefix = {arXiv},
  primaryClass = {math.RT},
}

@article {Sog88,
    AUTHOR = {Sogge, Christopher D.},
     TITLE = {Concerning the {$L^p$} norm of spectral clusters for
              second-order elliptic operators on compact manifolds},
   JOURNAL = {J. Funct. Anal.},
  FJOURNAL = {Journal of Functional Analysis},
    VOLUME = {77},
      YEAR = {1988},
    NUMBER = {1},
     PAGES = {123--138},
      ISSN = {0022-1236},
   MRCLASS = {35P05 (35J25 58G25)},
  MRNUMBER = {930395},
MRREVIEWER = {David\ Gurarie},
       DOI = {10.1016/0022-1236(88)90081-X},
       URL = {https://doi.org/10.1016/0022-1236(88)90081-X},
}

@book {Sze75,
    AUTHOR = {Szeg\"o, G\'abor},
     TITLE = {Orthogonal polynomials},
    SERIES = {American Mathematical Society Colloquium Publications},
    VOLUME = {Vol. XXIII},
   EDITION = {Fourth},
 PUBLISHER = {American Mathematical Society, Providence, RI},
      YEAR = {1975},
     PAGES = {xiii+432},
   MRCLASS = {42A52 (33A65)},
  MRNUMBER = {372517},
}

@article {Tat98,
    AUTHOR = {Tataru, Daniel},
     TITLE = {On the regularity of boundary traces for the wave equation},
   JOURNAL = {Ann. Scuola Norm. Sup. Pisa Cl. Sci. (4)},
  FJOURNAL = {Annali della Scuola Normale Superiore di Pisa. Classe di
              Scienze. Serie IV},
    VOLUME = {26},
      YEAR = {1998},
    NUMBER = {1},
     PAGES = {185--206},
      ISSN = {0391-173X,2036-2145},
   MRCLASS = {35L20 (35B65 35S15)},
  MRNUMBER = {1633000},
MRREVIEWER = {Keisuke\ Uchikoshi},
       URL = {http://www.numdam.org/item?id=ASNSP_1998_4_26_1_185_0},
}

@article {WZ21,
    AUTHOR = {Wang, Xing and Zhang, Cheng},
     TITLE = {Sharp endpoint estimates for eigenfunctions restricted to
              submanifolds of codimension 2},
   JOURNAL = {Adv. Math.},
  FJOURNAL = {Advances in Mathematics},
    VOLUME = {386},
      YEAR = {2021},
     PAGES = {Paper No. 107835, 20},
      ISSN = {0001-8708,1090-2082},
   MRCLASS = {58C40 (35P05 42B37 53C40)},
  MRNUMBER = {4270524},
       DOI = {10.1016/j.aim.2021.107835},
       URL = {https://doi.org/10.1016/j.aim.2021.107835},
}

@book {Zel17,
    AUTHOR = {Zelditch, Steve},
     TITLE = {Eigenfunctions of the {L}aplacian on a {R}iemannian manifold},
    SERIES = {CBMS Regional Conference Series in Mathematics},
    VOLUME = {125},
 PUBLISHER = {Conference Board of the Mathematical Sciences, Washington, DC;
              by the American Mathematical Society, Providence, RI},
      YEAR = {2017},
     PAGES = {xiv+394},
      ISBN = {978-1-4704-1037-7},
   MRCLASS = {58J50 (31C12 35J05 35L05 35P20 35R01 58J40)},
  MRNUMBER = {3729409},
MRREVIEWER = {G.\ V.\ Rozenblum},
}

@article {Zha21,
    AUTHOR = {Zhang, Yunfeng},
     TITLE = {Schr\"odinger equations on compact globally symmetric spaces},
   JOURNAL = {J. Geom. Anal.},
  FJOURNAL = {Journal of Geometric Analysis},
    VOLUME = {31},
      YEAR = {2021},
    NUMBER = {11},
     PAGES = {10778--10819},
      ISSN = {1050-6926,1559-002X},
   MRCLASS = {35Q55},
  MRNUMBER = {4310155},
MRREVIEWER = {Zheng\ Han},
       DOI = {10.1007/s12220-021-00664-7},
       URL = {https://doi.org/10.1007/s12220-021-00664-7},
}

@article {Zha26,
    AUTHOR = {Zhang, Yunfeng},
     TITLE = {Bounds of restriction of characters to submanifolds},
   JOURNAL = {Math. Z.},
  FJOURNAL = {Mathematische Zeitschrift},
    VOLUME = {312},
      YEAR = {2026},
    NUMBER = {1},
     PAGES = {Paper No. 13},
      ISSN = {0025-5874,1432-1823},
   MRCLASS = {22E30 (35P20 58J50)},
  MRNUMBER = {4984358},
       DOI = {10.1007/s00209-025-03896-3},
       URL = {https://doi.org/10.1007/s00209-025-03896-3},
}

\end{document}